\theoremstyle{plain}
\newtheorem{theorem}{Theorem}[section]
\newtheorem{lemma}{Lemma}[section]
\newtheorem{definition}{Definition}[section]
\newtheorem{corollary}{Corollary}[section]
\title{\ 
A relation between the shape of a permutation and the shape of the base poset derived from the Lehmer codes}
\author{Masaya Tomie}
\date{University of Morioka, Takizawa-mura, Iwate 020-0183, Japan 
(e-mail: tomie@morioka-u.ac.jp)}
\begin{document}

\maketitle

\begin{abstract}

For a permutation $\omega \in S_{n}$ Denoncourt constructed a poset $M_{\omega}$ which is the set of join-irreducibles of  Lehmer codes of the permutations in 
$[e, \omega]$ in the inversion order on $S_{n}$. 
In this paper we show that 
$M_{\omega}$ is a $B_{2}$-free poset if and only if $\omega$ is a  $3412-3421$-avoiding permutation.

\end{abstract}

\section{Introduction}

Let $P, Q$ be a poset. A subposet $R \subset P$ is called {\it a $Q$-pattern subposet} if $R \simeq Q$ as a poset. We say that {\it $P$ is $Q$-free} if $P$ has no $Q$-pattern subposets. 
The number of $1 + 3$-free and $2 + 2$-free posets with $n$ elements is $\frac{1}{n+1} \binom{2n}{n}$ the $n$-th Catalan number \cite{stanley}. 

For $\sigma \in S_{n}, \pi \in S_{k}$ with $k < n$, we say a permutation $\sigma$ is a 
$\pi$ avoiding permutation if $st(\sigma(i_{1}) \sigma(i_{2}) \cdots \sigma(i_{k})) \neq \pi(1) \pi(2) \cdots \pi(k)$ 
for any  $1 \le i_{1} < i_{2} , \cdots < i_{k} \le n$.  
The number of $\pi$ avoiding permutation in $S_{n}$ is $\frac{1}{n+1} \binom{2n}{n}$ for all $\pi \in S_{3}$ \cite{knuth-1} \cite{knuth-2}.
In this paper we consider the relation  {\it $B_{2}$-free posets} where $B_{2}$ is Boolean algebra of rank $2$ and $3412-3421$-avoiding permutations by considering Lehmer codes. 

In \cite{denoncourt} Denoncourt showed that the set of Lehmer codes for permutations in $\Lambda_{\omega}$ ordered by the product order on $\mathbb{N}^{n}$ is a distributive lattice where $\Lambda_{\omega} = \{ \sigma | {\rm Inv}(\sigma) \subset {\rm Inv}(\omega) \}$ and he also gave the  expression of  $M_{\omega}$ which is  the set of  join-irreducibles of the set of Lehmer codes for  $\Lambda_{\omega}$. 
 
In this paper we focus on the relation between the shape of $M_{\omega}$ and that of $\omega$ and obtain the following result.

\begin{theorem}

$M_{\omega}$ is a $B_{2}$-free poset if and only if $\omega$ is a  $3412-3421$-avoiding permutation. 

\end{theorem}

\section{Notations and Remarks}

In this paper we use 1-line notation, this is $\omega = \omega(1) \omega(2) \cdots \omega(n)$ for $\omega \in S_{n}$. 
Put $\Lambda_{\omega} := \{ \sigma | {\rm Inv}(\sigma) \subset {\rm Inv}(\omega) \} 
$ where ${\rm Inv}(\omega) := \{ (i,j) | 1 \le i < j \le n, \omega(i) > \omega(j) \}$. In other words $\Lambda_{\omega}$ is the interval $[ e, \omega]$ in the left Bruhat order. 

We put $c_{i}(\omega) := \sharp \{j | 1 \le i < j \le n, \omega(i) > \omega(j) \}$ the number of inversions of $\omega$ with first coordinate is $i$ 
 and $c_{ij}(\omega) := \sharp \{ k | 1 \le i < k < j \le n, \omega(i) > \omega(k) \}$
 the number of inversions of $\omega$ with first coordinate is $i$ and second coordinate is between $i$ and $j$.  
The finite sequence 

\begin{center}
${\bf c}(\omega) := (c_{1}(\omega), c_{2}(\omega), \ldots , c_{n}(\omega) )$
\end{center}

 is called the {\it Lehmer code} for $\omega$ and let ${\bf c}(\Lambda_{\omega})$ be the set of Lehmer codes of permutations in $\Lambda_{\omega}$.  
In \cite{denoncourt} Denoncourt showed the following result. 

\begin{theorem}[Denoncourt]

For $\omega \in S_{n}$ the subposet ${\bf c}(\Lambda_{\omega})$ of $\mathbb{N}$ is a distributive lattice.

\end{theorem}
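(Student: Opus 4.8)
The plan is to realize $\mathbf{c}(\Lambda_\omega)$ as a sublattice of the ambient distributive lattice $P:=\prod_{i=1}^{n}\{0,1,\dots,n-i\}$, the product of chains into which every Lehmer code falls. Since a product of chains is distributive, and any subset of a distributive lattice that is closed under the coordinatewise meet $\wedge$ (componentwise minimum) and join $\vee$ (componentwise maximum) is again a distributive lattice, it suffices to prove that $\mathbf{c}(\Lambda_\omega)$ is closed under $\wedge$ and $\vee$. Because the Lehmer code is a bijection from $S_n$ onto $P$, the componentwise minimum and maximum of two codes are automatically codes of \emph{some} permutations $\mu,\nu$; the entire content is that $\mu,\nu\in\Lambda_\omega$, i.e.\ that $\mathrm{Inv}(\mu),\mathrm{Inv}(\nu)\subseteq\mathrm{Inv}(\omega)$.

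The device I would use is an intrinsic description of which codes occur, phrased entirely through the numbers $c_i(\omega)$ and $c_{ij}(\omega)$:
\[
\mathbf{c}(\Lambda_\omega)=\Bigl\{\,c\in P \ :\ c_i\le c_i(\omega)\ \text{for all }i,\ \text{and}\ c_i\le c_j+c_{ij}(\omega)\ \text{whenever }i<j\text{ and }\omega(i)<\omega(j)\,\Bigr\}.
\]
Necessity of these inequalities is straightforward. The box condition $c_i(\sigma)\le c_i(\omega)$ holds because every inversion of $\sigma$ with first coordinate $i$ is an inversion of $\omega$. For the difference condition, fix $i<j$ with $\omega(i)<\omega(j)$; then $(i,j)\notin\mathrm{Inv}(\sigma)$, so $\sigma(i)<\sigma(j)$, whence every $k>j$ with $\sigma(k)<\sigma(i)$ also satisfies $\sigma(k)<\sigma(j)$. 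Splitting $c_i(\sigma)$ according to whether the inverted position lies before or after $j$ gives $c_i(\sigma)=c_{ij}(\sigma)+\#\{k>j:\sigma(k)<\sigma(i)\}\le c_{ij}(\omega)+c_j(\sigma)$, using $c_{ij}(\sigma)\le c_{ij}(\omega)$.

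Granting this description, the lattice property is immediate: each constraint is either a box constraint $c_i\le\text{const}$ or a difference constraint $c_i-c_j\le\text{const}$, and one checks directly (a short case analysis on which of the two arguments attains the min, resp.\ max, in each coordinate) that the set of integer points of $P$ satisfying such a system is closed under coordinatewise minimum and maximum. Thus $\mathbf{c}(\Lambda_\omega)$ is a sublattice of $P$, and therefore distributive.

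The hard part, and the step I would spend the most effort on, is the sufficiency of the description: that every $c\in P$ satisfying the displayed inequalities is actually the code of a permutation in $\Lambda_\omega$. Here the subtlety is that the code records only the inversion counts $c_i$, whereas membership in $\Lambda_\omega$ depends on exactly which pairs $(i,j)$ are inversions. I would reconstruct $\sigma$ from $c$ by the usual rule ($\sigma(i)$ is the $(c_i+1)$-st smallest unused value) and prove, by induction on $n$, that $\omega(i)<\omega(j)$ forces $\sigma(i)<\sigma(j)$. The role of the difference constraints $c_i\le c_j+c_{ij}(\omega)$ is precisely to prevent position $i$ from reaching past position $j$ and inverting a value it must not; converting this intuition into a clean positional induction is the main obstacle, and is where the quantities $c_{ij}(\omega)$ earn their place.
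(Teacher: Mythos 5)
Your overall strategy is sound, and it is in fact the same general route as the source of this theorem (note that the paper you are working from does not prove this statement at all --- it quotes it from \cite{denoncourt}, so the only meaningful comparison is with Denoncourt's own argument): realize $\mathbf{c}(\Lambda_{\omega})$ as a sublattice of the product of chains $\prod_{i=1}^{n}\{0,1,\dots,n-i\}$, so that distributivity is inherited. Your necessity argument --- every code of $\sigma\in\Lambda_{\omega}$ satisfies $c_{i}(\sigma)\le c_{i}(\omega)$ and $c_{i}(\sigma)\le c_{j}(\sigma)+c_{ij}(\omega)$ whenever $i<j$ and $\omega(i)<\omega(j)$ --- is correct, and so is the routine observation that an integer box cut out by difference constraints $c_{i}-c_{j}\le\mathrm{const}$ is closed under componentwise $\min$ and $\max$.

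However, there is a genuine gap, and you have located it yourself: the converse inclusion, that every integer vector satisfying your inequalities is the Lehmer code of a permutation lying in $\Lambda_{\omega}$, is never proved; you only describe how you \emph{would} prove it. This is not a technicality: it carries essentially all of the combinatorial content of the theorem. Without it, your argument establishes only that $\mathbf{c}(\Lambda_{\omega})\subseteq S$, where $S$ is the solution set of the inequality system, and that $S$ is a sublattice of $\mathbb{N}^{n}$; it does not rule out that the componentwise max (or min) of two codes of permutations in $\Lambda_{\omega}$, while still lying in $S$, is the code of a permutation \emph{outside} $\Lambda_{\omega}$ --- i.e.\ that $\mathbf{c}(\Lambda_{\omega})$ is merely a subset, not a sublattice, of $S$. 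The claimed description does appear to be true (it checks out on small cases, e.g.\ $\omega=3412$, $4231$, $35142$, and is essentially equivalent to Denoncourt's characterization of the image of $\Lambda_{\omega}$ under the code map), so the plan is salvageable; but as written, the proof is incomplete precisely at the step where the difficulty lives: one must show that for the permutation $\sigma$ reconstructed from an admissible vector $c$, the implication $\omega(i)<\omega(j)\Rightarrow\sigma(i)<\sigma(j)$ holds, which requires an actual argument (for instance, choosing a putative violating pair $(i,j)$ extremally and exhibiting a violated inequality), not just the statement that the constraints ``prevent'' violations.
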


Let $L$ be a finite distributive lattice and $P$ the subposet of join irreducible elements in $L$. 
Then {\it the fundamental theorem for finite distributive lattices      }  states that $L \simeq J(P)$ where $J(P)$ is the poset of order ideals of $P$ ordered by inclusion \cite{birkhoff}.  In \cite{denoncourt} Denoncourt determined the set of 
join irreducible elements in ${\bf c}(\Lambda_{\omega})$.

\begin{definition}[Denoncourt]

For $i \in [n]$ such that  $c_{i}(\omega) > 0$ and for each 
$x \in [c_{i}(\omega)]$, define $m_{i,x}(\omega) \in \mathbb{N}$ coordinate-wise by 

\begin{enumerate}

\item       $\pi_{j}(m_{i,x}(\omega)) = 0$ if $(i,j) \in {\rm Inv}(\omega)$, 

\item     $\pi_{j}(m_{i,x}(\omega)) = 0$ if $j < i$, 

\item    $\pi_{j}(m_{i,x}(\omega)) = x$ if $j = i$,

\item    $\pi_{j}(m_{i,x}(\omega)) =   
                 {\rm  max}  \{ 0, x - c_{i,j}(\omega)    \}$  if  $j > i$
 and $(i,j) \notin {\rm Inv}(\omega)$

\end{enumerate}

where $\pi_{j}(m_{i,x}(\omega))$ denotes the $j$-th coordinate of $m_{i,x}(\omega)$.  Put $M_{\omega} = \{  m_{i,x}(\omega)  | 1 \le i \le n, c_{i}(\omega) > 0, x \in [c_{i}(\omega)]    $. 

\end{definition}

Let $M_{\omega} = \{ m_{i,x}(\omega) | 1 \le i \le n \ {\rm such \ that \ } c_{i}(\omega) > 0 \ {\rm and} \ x \in [c_{i}(\omega)]         \} $ and 
$C_{i}(\omega) = \{ m_{i,x}(\omega) | x \in [c_{i}(\omega)] \} $
 for $1 \le i \le n$ such that $c_{i}(\omega)  \neq 0$. 
Then $M_{\omega}$ is a subposet of $\mathbb{N}^{n}$ in the product order. 
Denoncourt showed the following results, see Corollary 5.6 and Theorem 6.6 of 
his paper \cite{denoncourt}.

\begin{theorem}[Denoncourt]

The set $M_{\omega}$ is the set of join irreducible elements of ${\bf c}(\Lambda_{\omega})$. 

\end{theorem}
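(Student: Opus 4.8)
\section*{Proof proposal}

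The plan is to use Denoncourt's theorem that ${\bf c}(\Lambda_{\omega})$ is a finite distributive lattice and, more sharply, that it is closed under coordinate-wise minimum and maximum in $\mathbb{N}^{n}$ — the property that actually produces the distributive structure — so that its meet $\wedge$ and join $\vee$ are computed coordinate-wise. I would first record this closure, quoting it from Denoncourt's proof or re-deriving it. Recall that in a finite distributive lattice an element is join-irreducible precisely when it is neither the bottom nor a join of two strictly smaller elements. I will prove the set equality $M_{\omega}=J({\bf c}(\Lambda_{\omega}))$ by two inclusions, the common engine being the identification of each $m_{i,x}(\omega)$ as a canonical minimal element.

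For $M_{\omega}\subseteq J({\bf c}(\Lambda_{\omega}))$ I would prove the key lemma: for each $i$ with $c_{i}(\omega)>0$ and each $x\in[c_{i}(\omega)]$, the vector $m_{i,x}(\omega)$ lies in ${\bf c}(\Lambda_{\omega})$, has $i$-th coordinate $x$, and satisfies $m_{i,x}(\omega)\le\mathbf{a}$ for every $\mathbf{a}\in S_{i,x}:=\{\mathbf{a}\in{\bf c}(\Lambda_{\omega}):\pi_{i}(\mathbf{a})\ge x\}$; hence $m_{i,x}(\omega)=\bigwedge S_{i,x}$ is the minimum of $S_{i,x}$, which is nonempty since ${\bf c}(\omega)\in S_{i,x}$. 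Join-irreducibility is then immediate: if $m_{i,x}(\omega)=\mathbf{a}\vee\mathbf{b}$ with $\mathbf{a},\mathbf{b}<m_{i,x}(\omega)$, then since $\vee$ is coordinate-wise maximum and $\pi_{i}(m_{i,x}(\omega))=x$, one of $\mathbf{a},\mathbf{b}$ has $i$-th coordinate $\ge x$ and so lies in $S_{i,x}$, yet is strictly below its minimum $m_{i,x}(\omega)$, a contradiction. The lemma also exhibits the chains $C_{i}(\omega)=\{m_{i,1}(\omega)<\cdots<m_{i,c_{i}(\omega)}(\omega)\}$ into which $M_{\omega}$ decomposes.

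For the reverse inclusion I would establish the canonical join decomposition $\mathbf{v}=\bigvee_{i:\pi_{i}(\mathbf{v})>0} m_{i,\pi_{i}(\mathbf{v})}(\omega)$ for every $\mathbf{v}\in{\bf c}(\Lambda_{\omega})$. Here each summand satisfies $m_{i,\pi_{i}(\mathbf{v})}(\omega)=\bigwedge S_{i,\pi_{i}(\mathbf{v})}\le\mathbf{v}$ because $\mathbf{v}\in S_{i,\pi_{i}(\mathbf{v})}$, which gives the inequality $\ge$; the reverse direction is a coordinate-wise check from the explicit formula, the point being that the $i$-th summand already supplies $\pi_{i}(\mathbf{v})$ in coordinate $i$. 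Granting this, any join-irreducible $\mathbf{v}$ is a join of elements of $M_{\omega}$ and hence, being join-irreducible, equals one of them; thus $J({\bf c}(\Lambda_{\omega}))\subseteq M_{\omega}$, completing the proof.

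The main obstacle lies in the key lemma, and it splits in two. The lower-bound half — that $c_{j}(\sigma)\ge\max\{0,x-c_{ij}(\omega)\}$ for every $\sigma\in\Lambda_{\omega}$ with $c_{i}(\sigma)\ge x$ and every non-inversion position $j>i$ — admits a clean argument: since $(i,j)\notin{\rm Inv}(\omega)\supseteq{\rm Inv}(\sigma)$ forces $\sigma(i)<\sigma(j)$, and since at most $c_{ij}(\omega)$ of the $\ge x$ row-$i$ inversions of $\sigma$ can occupy positions strictly between $i$ and $j$, at least $x-c_{ij}(\omega)$ of them sit at positions $k>j$, where $\sigma(j)>\sigma(i)>\sigma(k)$ produces a row-$j$ inversion; at the remaining coordinates $\pi_{j}(m_{i,x}(\omega))=0$ is a trivial lower bound. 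The genuinely fiddly half is membership: one must exhibit $m_{i,x}(\omega)$ as the Lehmer code of an actual permutation $\sigma_{i,x}\in\Lambda_{\omega}$. I would verify that $m_{i,x}(\omega)$ is a valid code, its $j$-th entry being $\le n-j$, and then reconstruct ${\rm Inv}(\sigma_{i,x})$ from it and check ${\rm Inv}(\sigma_{i,x})\subseteq{\rm Inv}(\omega)$; concretely, $\sigma_{i,x}$ is the permutation whose inversion set is the smallest inversion set of a permutation that is contained in ${\rm Inv}(\omega)$ and contains the $x$ nearest inversions of ${\rm Inv}(\omega)$ in row $i$. Confirming that such a permutation exists, lies in $\Lambda_{\omega}$, and has code exactly $m_{i,x}(\omega)$ is where the real work concentrates; the remaining steps are then formal consequences of distributivity and of the coordinate-wise lattice operations.
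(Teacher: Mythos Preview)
The paper does not contain a proof of this statement at all: it is quoted as a result of Denoncourt, with the pointer ``see Corollary~5.6 and Theorem~6.6 of his paper \cite{denoncourt}'' and nothing further. So there is no in-paper argument to compare your proposal against.

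That said, your outline is a coherent reconstruction of how such a proof goes and is essentially the route Denoncourt takes: closure of ${\bf c}(\Lambda_{\omega})$ under coordinate-wise $\min$ and $\max$, identification of $m_{i,x}(\omega)$ as the minimum of the up-set $S_{i,x}=\{\mathbf{a}:\pi_{i}(\mathbf{a})\ge x\}$, join-irreducibility from that minimality, and the canonical decomposition $\mathbf{v}=\bigvee_{i:\pi_{i}(\mathbf{v})>0} m_{i,\pi_{i}(\mathbf{v})}(\omega)$ for the reverse inclusion. Your lower-bound argument for the key lemma is correct as written. You are also right that the only nontrivial step left is membership, i.e.\ realizing $m_{i,x}(\omega)$ as the Lehmer code of an actual $\sigma_{i,x}\in\Lambda_{\omega}$; this is precisely what Denoncourt handles carefully, and your description of $\sigma_{i,x}$ as the permutation whose inversion set is the closure (inside ${\rm Inv}(\omega)$) of the $x$ nearest row-$i$ inversions is the right idea, though turning it into a proof requires checking that this set really is the inversion set of a permutation and that its code matches the defining formula for $m_{i,x}(\omega)$ coordinate by coordinate.
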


\begin{lemma}[Denoucourt]\label{denoncourt-231modoki}

\begin{enumerate}

\item For $\omega \in S_{n}$ and $1 \le i < j \le n $ with $(i,j) \in {\rm Inv}(\omega)$,
 every element of $C_{i}(\omega)$ is incomparable with every element of $C_{j}(\omega)$,

\item For $\omega \in S_{n}$ and $1 \le i < j \le n $ with $(i,j) \notin {\rm Inv}(\omega)$, we have 
$m_{i,x}(\omega) > m_{j, y}(\omega)$ if and only if $y \le x - c_{i,j}(\omega)$. 
\end{enumerate}

In other words there exists a pair of comparable elements $m_{i,x}(\omega)>  m_{j, y}(\omega)$ with $1 \le i < j \le n $ if and only if  $st( \omega(i) \omega(j)  \omega(l)) = 231$ for some $j < l$. Hence we have the following corollary. 

\end{lemma}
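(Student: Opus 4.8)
The plan is to prove parts (1) and (2) by a direct coordinate-by-coordinate comparison of the integer vectors $m_{i,x}(\omega)$ and $m_{j,y}(\omega)$ straight from the coordinate formula in the definition above, and then to derive the ``in other words'' equivalence purely formally from (1) and (2) by translating the inequality $y \le x - c_{i,j}(\omega)$ into a statement about the relative order of $\omega(i)$, $\omega(j)$ and some later value $\omega(l)$.

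For parts (1) and (2), fix $i < j$ with $c_i(\omega), c_j(\omega) > 0$ and take $x \in [c_i(\omega)]$, $y \in [c_j(\omega)]$. The first thing I would record is that at coordinate $i$ we have $\pi_i(m_{i,x}(\omega)) = x \ge 1$ while $\pi_i(m_{j,y}(\omega)) = 0$ (since $i < j$), so $m_{i,x}(\omega) \le m_{j,y}(\omega)$ can never hold; hence the only possible relation is $m_{i,x}(\omega) \ge m_{j,y}(\omega)$, and whenever it holds it is automatically strict. If $(i,j) \in \mathrm{Inv}(\omega)$ then at coordinate $j$ we have $\pi_j(m_{i,x}(\omega)) = 0 < y = \pi_j(m_{j,y}(\omega))$, so $\ge$ fails too and the two elements are incomparable, which gives (1). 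If $(i,j) \notin \mathrm{Inv}(\omega)$, then $\pi_j(m_{i,x}(\omega)) = \max\{0,\ x - c_{i,j}(\omega)\}$, so $m_{i,x}(\omega) \ge m_{j,y}(\omega)$ forces $y \le x - c_{i,j}(\omega)$; conversely I would check that this single inequality already implies $\pi_l(m_{i,x}(\omega)) \ge \pi_l(m_{j,y}(\omega))$ at every remaining coordinate $l$, the only nontrivial case being $i < j < l$ with none of $(i,j), (j,l), (i,l)$ an inversion, where it reduces to the ``triangle inequality'' $c_{i,l}(\omega) \le c_{i,j}(\omega) + c_{j,l}(\omega)$. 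This estimate holds because $\omega(i) < \omega(j)$ forces $\{k : j < k < l,\ \omega(i) > \omega(k)\} \subseteq \{k : j < k < l,\ \omega(j) > \omega(k)\}$, so the part of $c_{i,l}(\omega)$ coming from indices $> j$ is at most $c_{j,l}(\omega)$, the part from indices $< j$ is exactly $c_{i,j}(\omega)$, and the index $j$ itself contributes nothing. This proves (2). (Parts (1) and (2) are due to Denoncourt~\cite{denoncourt}.)

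For the ``in other words'' claim: if $m_{i,x}(\omega) > m_{j,y}(\omega)$ with $i < j$, then (1) forces $(i,j) \notin \mathrm{Inv}(\omega)$, i.e.\ $\omega(i) < \omega(j)$, and by (2) such admissible $x, y$ exist --- take $x = c_i(\omega)$ and $y = 1$ --- precisely when $c_i(\omega) - c_{i,j}(\omega) \ge 1$. Since $c_i(\omega) - c_{i,j}(\omega) = \sharp\{l : l \ge j,\ \omega(i) > \omega(l)\}$ and $l = j$ does not count (as $\omega(i) < \omega(j)$), this happens if and only if there is $l > j$ with $\omega(i) > \omega(l)$; combined with $i < j < l$ and $\omega(l) < \omega(i) < \omega(j)$ this is exactly $st(\omega(i)\omega(j)\omega(l)) = 231$. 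Conversely, given such an $l$, the inequalities $\omega(j) > \omega(l)$ and $\omega(i) > \omega(l)$ give $c_j(\omega), c_i(\omega) \ge 1$, so $m_{i, c_i(\omega)}(\omega)$ and $m_{j,1}(\omega)$ are defined and are comparable by (2). The only genuine obstacle is the coordinatewise verification behind (2), in particular isolating and proving the triangle inequality for the numbers $c_{i,j}(\omega)$; everything else is routine bookkeeping with inversion counts.
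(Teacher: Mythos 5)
Your proposal is correct, but it is worth pointing out that the paper itself offers no proof of this lemma at all: both enumerated parts are quoted from Denoncourt's paper (his Corollary 5.6 and Theorem 6.6), and the ``in other words'' reformulation is asserted without argument. So there is nothing in the paper to match your proof against step by step; what you have done is supply a self-contained verification of a cited result. Your argument is sound. Parts (1) and the necessity direction of (2) are indeed immediate from the coordinates $i$ and $j$ alone, and you correctly identify that the only real content is the sufficiency direction of (2), which rests on the subadditivity $c_{i,l}(\omega) \le c_{i,j}(\omega) + c_{j,l}(\omega)$ when $\omega(i) < \omega(j)$; your proof of that estimate (split the counting set at $k = j$, note $j$ itself contributes nothing since $\omega(i) < \omega(j)$, and use $\{k : j<k<l,\ \omega(i)>\omega(k)\} \subseteq \{k: j<k<l,\ \omega(j)>\omega(k)\}$) is exactly the right mechanism, and it is essentially the same containment the paper later invokes in its own Lemma 3.2 and Lemma \ref{extension-of-cj-chain}. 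Your derivation of the ``in other words'' statement is also correct, and usefully explicit about the two small points the paper glosses over: that the optimal choice is $x = c_i(\omega)$, $y = 1$, so a comparable pair exists precisely when $c_i(\omega) - c_{i,j}(\omega) = \sharp\{l > j : \omega(i) > \omega(l)\} \ge 1$, and that in the converse direction one must first observe $c_i(\omega), c_j(\omega) \ge 1$ so that $m_{i,c_i(\omega)}(\omega)$ and $m_{j,1}(\omega)$ actually exist. The net effect is that your write-up makes the paper's black box checkable, at the cost of about a page of coordinate bookkeeping that the paper avoids by citation.
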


\begin{corollary}

If $\omega$ is a $231$-avoiding permutation then $M_{\omega}$ is disjoint union of the chains. 

\end{corollary}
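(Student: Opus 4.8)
\emph{Proof proposal.} The plan is to write $M_\omega$ as the union of the blocks $C_i(\omega)$, to observe that each block is a chain, and to show that the $231$-avoiding hypothesis kills all order relations between distinct blocks; the corollary then follows immediately.

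First I would check that each $C_i(\omega)$ is a chain. From the four defining conditions of $m_{i,x}(\omega)$, every coordinate of $m_{i,x}(\omega)$ is a weakly non-decreasing function of $x$: the $i$-th coordinate is $x$, for $j>i$ with $(i,j)\notin\mathrm{Inv}(\omega)$ it is $\max\{0,\,x-c_{i,j}(\omega)\}$, and all remaining coordinates are identically $0$. Hence $m_{i,1}(\omega)<m_{i,2}(\omega)<\cdots<m_{i,c_i(\omega)}(\omega)$ in the product order on $\mathbb{N}^n$ (these elements are pairwise distinct since their $i$-th coordinates are), so $C_i(\omega)$ is a chain with $c_i(\omega)$ elements.

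Next I would show that for $i<j$ the blocks $C_i(\omega)$ and $C_j(\omega)$ are disjoint and that the only relations possibly joining them point ``downward in index''. Indeed, by condition (3) the $i$-th coordinate of $m_{i,x}(\omega)$ is $x>0$, while by condition (2) the $i$-th coordinate of $m_{j,y}(\omega)$ is $0$; therefore $m_{i,x}(\omega)\neq m_{j,y}(\omega)$ and $m_{i,x}(\omega)\not< m_{j,y}(\omega)$. Thus any comparability between an element of $C_i(\omega)$ and an element of $C_j(\omega)$ must be of the form $m_{i,x}(\omega)>m_{j,y}(\omega)$ with $i<j$, and by Lemma~\ref{denoncourt-231modoki} (its ``in other words'' reformulation) the existence of such a pair forces $\mathrm{st}(\omega(i)\omega(j)\omega(l))=231$ for some $j<l$, contradicting the assumption that $\omega$ avoids $231$. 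Combining the two steps, $M_\omega$ is the disjoint union of the chains $C_i(\omega)$ with no relations between distinct blocks.

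There is no real obstacle here: once Lemma~\ref{denoncourt-231modoki} is in hand, the corollary is essentially a restatement of it, and the only care needed is the bookkeeping in the second step ensuring that a cross-block comparability can only go from a smaller to a larger index, so that the lemma applies verbatim.
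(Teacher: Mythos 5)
Your proposal is correct and follows exactly the route the paper intends: the corollary is stated as an immediate consequence of Lemma~\ref{denoncourt-231modoki} (via its ``in other words'' reformulation), and you simply supply the two routine details the paper leaves implicit, namely that each $C_i(\omega)$ is a chain and that any cross-block comparability must have the form $m_{i,x}(\omega)>m_{j,y}(\omega)$ with $i<j$. Nothing further is needed.
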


\section{Main Result}

In this section we give a proof of the following result. 

\begin{theorem}

$M_{\omega}$ is a $B_{2}$-free poset if and only if $\omega$ is a  $3412-3421$-avoiding permutation. 

\end{theorem}

\begin{definition}

Let $P$ be a poset. A subposet $\{ a,b,c,d \} \subset P$ with distinct elements   is called $B_{2}$-pattern subposet  if $\{ a, b, c, d \} \simeq B_{2}$ where $B_{2}$ is a Boolean algebra of rank $2$. 
We say that $P$ has a $B_{2}$-pattern if $P$ has a $B_{2}$-pattern subposet. 

\end{definition}

We will define the following poset patterns. 

\begin{definition}

\begin{enumerate}

\item 

For $1 \le i < j \le n, b < a \in [c_{i}(\omega)]$ and 
$ c < d \in [c_{j}(\omega)] $ with $a + c = b + d$ the poset 
$\{ m_{i,a}(\omega), m_{i,b}(\omega), m_{j,c}(\omega), m_{j,d}(\omega) \}$
 is called $parallelogram-pattern \ poset$ if \\ $m_{i,a}(\omega) > m_{j,d}(\omega),  m_{i,b}(\omega) > m_{j,c}(\omega)$ and the two elements $m_{i,b}(\omega)$ and $m_{j,d}(\omega)$ are incomparable. 

We say that  $M_{\omega}$ has a $parallelogram-pattern$ if $M_{\omega}$ contains a $parallelogram-pattern \ poset$. 

\item 

For $1 \le i < j \le n, b < a \in [c_{i}(\omega)]$ and 
$ c < d \in [c_{j}(\omega)] $ with $a + c = b + d$ the poset 
$\{ m_{i,a}(\omega), m_{i,b}(\omega), m_{j,c}(\omega), m_{j,d}(\omega) \}$
 is called $C_{4}-parallelogram-pattern \ poset$ if \\ $m_{i,a}(\omega) > m_{j,d}(\omega),  m_{i,b}(\omega) > m_{j,c}(\omega)$ and the two elements  $m_{i,b}(\omega) $ and $ m_{j,d}(\omega)$ are comparable. 
If $m_{i,b}(\omega)$ and $m_{j,d}(\omega)$ are comparable then we have $m_{i,b}(\omega) > m_{j,d}(\omega)$ because the $i$-th entry of $m_{i,b}(\omega)$ is $b$ and that of $m_{j,d}(\omega)$ equals to  $0$.

We say that  $M_{\omega}$ has a $C_{4}-parallelogram-pattern$ if $M_{\omega}$ contains a $C_{4}-parallelogram-pattern \ poset$. 
Especially we have $m_{i,a}(\omega) > m_{i,b}(\omega) > m_{j,d}(\omega) > m_{j,c}(\omega)$. 

\end{enumerate}

Figure \ref{fig:parallelogram-pattern-and-C4-parallelogram-one} shows the shape of the parallelogram-pattern and the 
$C_{4}$-parallelogram pattern. 

\end{definition}

A parallelogram-pattern subposet is also $B_{2}$-pattern subposet, but a 
$B_{2}$-pattern subposet is not always a parallelogram-pattern subposet. 

\begin{figure}[htbp]
\begin{center}
 \includegraphics[width=70mm]
{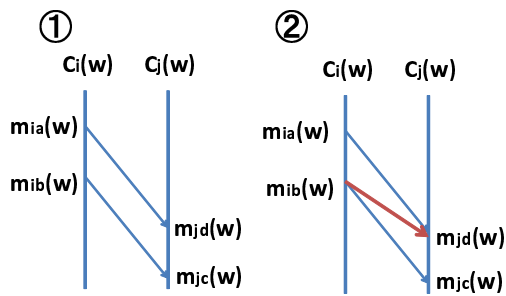}
 \caption{ Part.1 is the parallelogram-pattern and Part.2 is the $C_{4}$-parallelogram-pattern. }
 \label{fig:parallelogram-pattern-and-C4-parallelogram-one}
\end{center}
\end{figure}

The following statement is useful 
but it is easy to see, hence we omit the proof. 

\begin{lemma}\label{parallelogram-completion}

For $1 \le i < j \le n$ and $\omega \in S_{n}$, 

\begin{enumerate}

\item if $m_{i,a}(\omega) > m_{j,b}(\omega)$ with $a, b \ge 2$ then $m_{i,(a-1)}(\omega) > m_{j,(b-1)}(\omega)$, 
\item if $m_{i,a}(\omega) > m_{j,b}(\omega)$ with $a < c_{i}(\omega)$ and $ b < c_{j}(\omega)$ then $m_{i,(a+1)}(\omega) > m_{j,(b+1)}(\omega)$.

\end{enumerate}

\end{lemma}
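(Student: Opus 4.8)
The plan is to read off both implications directly from part (2) of Lemma \ref{denoncourt-231modoki}, which converts the comparability $m_{i,a}(\omega) > m_{j,b}(\omega)$ into a clean numerical inequality. First I would dispose of the degenerate case: if $(i,j) \in \mathrm{Inv}(\omega)$, then part (1) of Lemma \ref{denoncourt-231modoki} says every element of $C_{i}(\omega)$ is incomparable with every element of $C_{j}(\omega)$, so the hypothesis $m_{i,a}(\omega) > m_{j,b}(\omega)$ can never hold and both statements are vacuously true. Hence I may assume throughout that $(i,j) \notin \mathrm{Inv}(\omega)$.

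Under this assumption, part (2) of Lemma \ref{denoncourt-231modoki} gives the equivalence
\[
m_{i,x}(\omega) > m_{j,y}(\omega) \quad \Longleftrightarrow \quad y \le x - c_{i,j}(\omega),
\]
valid for all admissible indices $x \in [c_{i}(\omega)]$ and $y \in [c_{j}(\omega)]$. The whole point is that the defining inequality $y \le x - c_{i,j}(\omega)$, equivalently $x - y \ge c_{i,j}(\omega)$, depends only on the difference $x - y$ and is therefore invariant under shifting both indices by the same integer. For part (1), the hypothesis $m_{i,a}(\omega) > m_{j,b}(\omega)$ gives $b \le a - c_{i,j}(\omega)$; subtracting $1$ from both sides yields $b-1 \le (a-1) - c_{i,j}(\omega)$, and the assumption $a,b \ge 2$ guarantees that $a-1 \in [c_{i}(\omega)]$ and $b-1 \in [c_{j}(\omega)]$, so the equivalence applies and delivers $m_{i,(a-1)}(\omega) > m_{j,(b-1)}(\omega)$. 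For part (2), adding $1$ to both sides of $b \le a - c_{i,j}(\omega)$ gives $b+1 \le (a+1) - c_{i,j}(\omega)$, and the assumptions $a < c_{i}(\omega)$ and $b < c_{j}(\omega)$ ensure that $a+1 \in [c_{i}(\omega)]$ and $b+1 \in [c_{j}(\omega)]$, so the equivalence again applies and yields $m_{i,(a+1)}(\omega) > m_{j,(b+1)}(\omega)$.

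There is essentially no obstacle here beyond bookkeeping: the only thing one must check is that the shifted indices remain in the admissible ranges $[c_{i}(\omega)]$ and $[c_{j}(\omega)]$, which is exactly what the numerical side conditions ($a,b \ge 2$ in the first case, $a < c_{i}(\omega)$ and $b < c_{j}(\omega)$ in the second) are designed to supply. This is why the statement is ``easy to see'': both parts express the fact that the comparability relation between $C_{i}(\omega)$ and $C_{j}(\omega)$ is governed by a fixed threshold $c_{i,j}(\omega)$ on the index difference, so comparabilities propagate along the diagonals $a - b = \text{const}$ as long as one stays inside the chains.
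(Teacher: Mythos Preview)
Your proof is correct. The paper omits the proof entirely (``it is easy to see, hence we omit the proof''), and your argument via Lemma~\ref{denoncourt-231modoki} is precisely the natural one-line justification the author had in mind: the numerical criterion $y \le x - c_{i,j}(\omega)$ depends only on $x-y$, so it is preserved under simultaneous shifts of both indices, and the side conditions guarantee the shifted indices stay in range.
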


Figure \ref{fig:parallelogram-completion} shows a visualization of Lemma 
{\rmfamily \ref{parallelogram-completion}}.

\begin{figure}[htbp]
\begin{center}
 \includegraphics[width=70mm]
{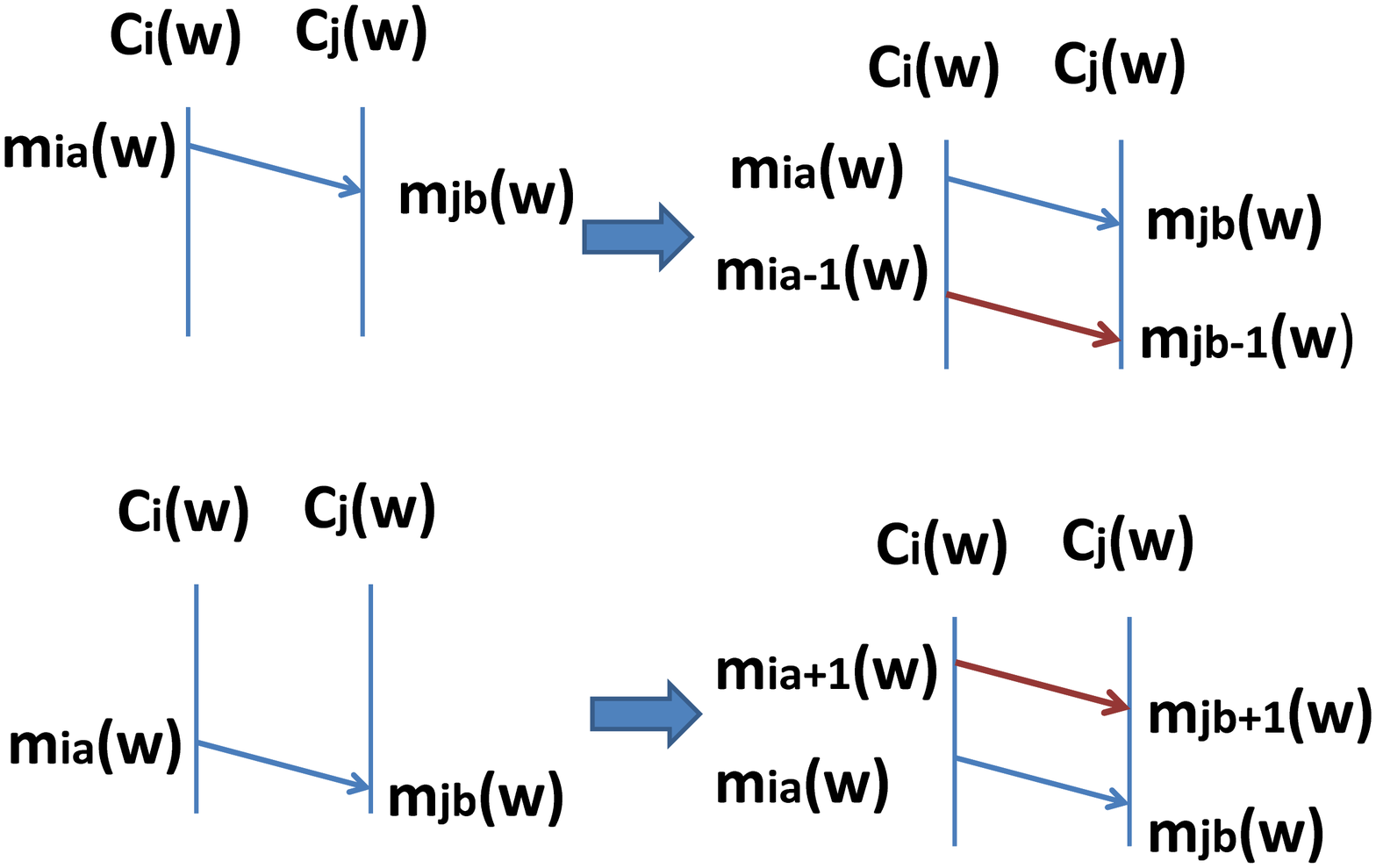}
 \caption{  }
 \label{fig:parallelogram-completion}
\end{center}
\end{figure}

\begin{lemma}

For $1 \le i < j \le n$ and $\omega \in S_{n}$ if 
$m_{i,p}(\omega) > m_{j,q}(\omega)$ for some $1 \le p \le c_{i}(\omega)$ and 
$1 \le q \le c_{j}( \omega)$ then 
$\{ k | j < k, \omega(i) > \omega(k) \} \subset 
\{ l | j < l, \omega(j) > \omega(l) \} $. 
\end{lemma}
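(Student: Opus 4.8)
The plan is to extract from the hypothesis the single order relation $\omega(i) < \omega(j)$ and then obtain the claimed inclusion by transitivity of $<$. First I would observe that the hypothesis says precisely that $m_{i,p}(\omega) \in C_{i}(\omega)$ and $m_{j,q}(\omega) \in C_{j}(\omega)$ are comparable elements of $M_{\omega}$. By Lemma~\ref{denoncourt-231modoki}(1), if $(i,j) \in {\rm Inv}(\omega)$ then every element of $C_{i}(\omega)$ is incomparable with every element of $C_{j}(\omega)$; contrapositively, the existence of the comparable pair $m_{i,p}(\omega) > m_{j,q}(\omega)$ forces $(i,j) \notin {\rm Inv}(\omega)$, that is, $\omega(i) < \omega(j)$ (equality is excluded since $\omega$ is a permutation).

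Next I would check the inclusion of the two sets directly. Let $k$ be any index with $j < k$ and $\omega(i) > \omega(k)$, so that $k$ lies in the left-hand set. Using $\omega(i) < \omega(j)$ from the previous step we get $\omega(k) < \omega(i) < \omega(j)$, hence $\omega(j) > \omega(k)$; since also $j < k$, the index $k$ lies in the right-hand set. This gives $\{ k \mid j < k,\ \omega(i) > \omega(k) \} \subset \{ l \mid j < l,\ \omega(j) > \omega(l) \}$, as desired. Note that both sets impose the same condition $> j$ on the index, so nothing extra has to be said about that constraint.

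I do not expect any real obstacle here: the only substantive ingredient is Denoncourt's incomparability statement, Lemma~\ref{denoncourt-231modoki}(1), and once $\omega(i) < \omega(j)$ is in hand the rest is a one-line transitivity argument; the sole point requiring a little care is to notice that both sets in the conclusion carry the same constraint $> j$ on the index. Should one wish to bypass Lemma~\ref{denoncourt-231modoki}, the relation $\omega(i) < \omega(j)$ is also visible directly from Denoncourt's coordinatewise definition of $m_{i,x}(\omega)$: the $j$-th coordinate of $m_{j,q}(\omega)$ equals $q \ge 1$ by clause~(3), while if $(i,j) \in {\rm Inv}(\omega)$ then the $j$-th coordinate of $m_{i,p}(\omega)$ would be $0$ by clause~(1), contradicting $m_{i,p}(\omega) > m_{j,q}(\omega)$.
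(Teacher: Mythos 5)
Your proposal is correct and follows essentially the same route as the paper: both derive $\omega(i) < \omega(j)$ from Lemma~\ref{denoncourt-231modoki} (the paper cites it directly, you spell out the contrapositive of part~(1)) and then conclude by transitivity. Your argument is just a more explicit version of the paper's two-line proof, with a harmless bonus observation that $\omega(i)<\omega(j)$ can also be read off the coordinate definition.
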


\begin{proof}

By Lemma {\rmfamily \ref{denoncourt-231modoki}} we have $\omega(i) < \omega(j)$ because $m_{i,p}(\omega) > m_{j,q}(\omega)$ for some $1 \le p \le c_{i}(\omega)$ and 
$1 \le q \le c_{j}( \omega)$. 
 Hence we have $\{ k | j < k, \omega(i) > \omega(k) \} \subset 
\{ l | j < l, \omega(j) > \omega(l) \} $.

\end{proof}

\begin{lemma}\label{extension-of-cj-chain}

If $m_{i,p}(\omega) > m_{j,q}(\omega)$ for some $\omega \in S_{n}$, $p \in [c_{i}(\omega)]$ and $q \in [c_{j}(\omega)]$, then we have $c_{j}(\omega) \ge c_{i}(\omega) + q -p$.

\end{lemma}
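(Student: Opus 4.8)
The plan is to read off both $c_i(\omega)$ and $c_j(\omega)$ directly from the definition of the meet-irreducibles and the hypothesis $m_{i,p}(\omega) > m_{j,q}(\omega)$, and then to compare the two counts via the nesting of inversion sets established in the preceding lemma. First I would recall that, by definition, $c_i(\omega) = \sharp\{k \mid i < k,\ \omega(i) > \omega(k)\}$ and $c_j(\omega) = \sharp\{k \mid j < k,\ \omega(j) > \omega(k)\}$. Since $m_{i,p}(\omega) > m_{j,q}(\omega)$, Lemma~\ref{denoncourt-231modoki} gives $(i,j) \notin {\rm Inv}(\omega)$ (so $\omega(i) < \omega(j)$) together with the quantitative relation $q \le p - c_{i,j}(\omega)$, i.e. $c_{i,j}(\omega) \le p - q$.

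Next I would split $c_i(\omega)$ according to whether an inversion of $\omega$ with first coordinate $i$ has its second coordinate in the open interval $(i,j)$ or in $[j,n]$. Because $\omega(i) < \omega(j)$, the index $j$ itself is not counted, so
\[
c_i(\omega) = c_{i,j}(\omega) + \sharp\{k \mid j < k,\ \omega(i) > \omega(k)\}.
\]
By the previous lemma, $\{k \mid j < k,\ \omega(i) > \omega(k)\} \subseteq \{k \mid j < k,\ \omega(j) > \omega(k)\}$, and the latter set has cardinality $c_j(\omega)$; hence $\sharp\{k \mid j < k,\ \omega(i) > \omega(k)\} \le c_j(\omega)$. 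Combining, $c_i(\omega) \le c_{i,j}(\omega) + c_j(\omega) \le (p-q) + c_j(\omega)$, which rearranges to $c_j(\omega) \ge c_i(\omega) + q - p$, as desired.

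There is no serious obstacle here: the argument is essentially bookkeeping, with the one point requiring care being the justification that $(i,j) \notin {\rm Inv}(\omega)$ and the extraction of the inequality $c_{i,j}(\omega) \le p - q$ from the comparability clause of Lemma~\ref{denoncourt-231modoki}. I would make sure to note explicitly that $j \notin \{k \mid i < k,\ \omega(i) > \omega(k)\}$ precisely because $\omega(i) < \omega(j)$, so that the decomposition of $c_i(\omega)$ above is exact rather than merely an inequality; everything else follows by substituting the bound on $c_{i,j}(\omega)$ and the set inclusion from the previous lemma.
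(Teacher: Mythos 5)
Your proposal is correct and follows essentially the same route as the paper: both obtain $c_{i,j}(\omega) \le p-q$ from the comparability criterion, decompose $c_i(\omega) = c_{i,j}(\omega) + \sharp\{k \mid j<k,\ \omega(i)>\omega(k)\}$ using $\omega(i)<\omega(j)$, and bound the second term by $c_j(\omega)$ via the set inclusion of the preceding lemma. Your write-up is in fact slightly cleaner, since you cite Lemma~\ref{denoncourt-231modoki} for the inequality $c_{i,j}(\omega)\le p-q$ rather than re-deriving it from the $j$-th coordinates.
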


\begin{proof}

Set $m_{i,p}(\omega) = (0, \cdots ,0, \displaystyle\overbrace{p}^{i}, \cdots , \displaystyle\overbrace{x}^{j}, \cdots ), 
m_{j,q}(\omega) = (0, \cdots ,0, \displaystyle\overbrace{0}^{i}, \cdots  ,\displaystyle\overbrace{q}^{j}, \cdots  )$. 
Then we have $x = p - c_{i,j}(\omega)$ and $c_{i,j}(\omega) \le p-q$  
 because $m_{i,p}(\omega) > m_{j,q}(\omega)$ and $x \le q$. 
Also we have $c_{i}(\omega) = c_{i,j}(\omega) + \sharp \{ k | j < k, \omega(i) > \omega(k) \} \le c_{i,j}(\omega) + c_{j}(\omega) \le p-q + c_{j}(\omega)$. 
Hence we have $c_{j}(\omega) \ge c_{i}(\omega) + q -p$.

\end{proof}

\begin{figure}[htbp]
\begin{center}
 \includegraphics[width=70mm]
{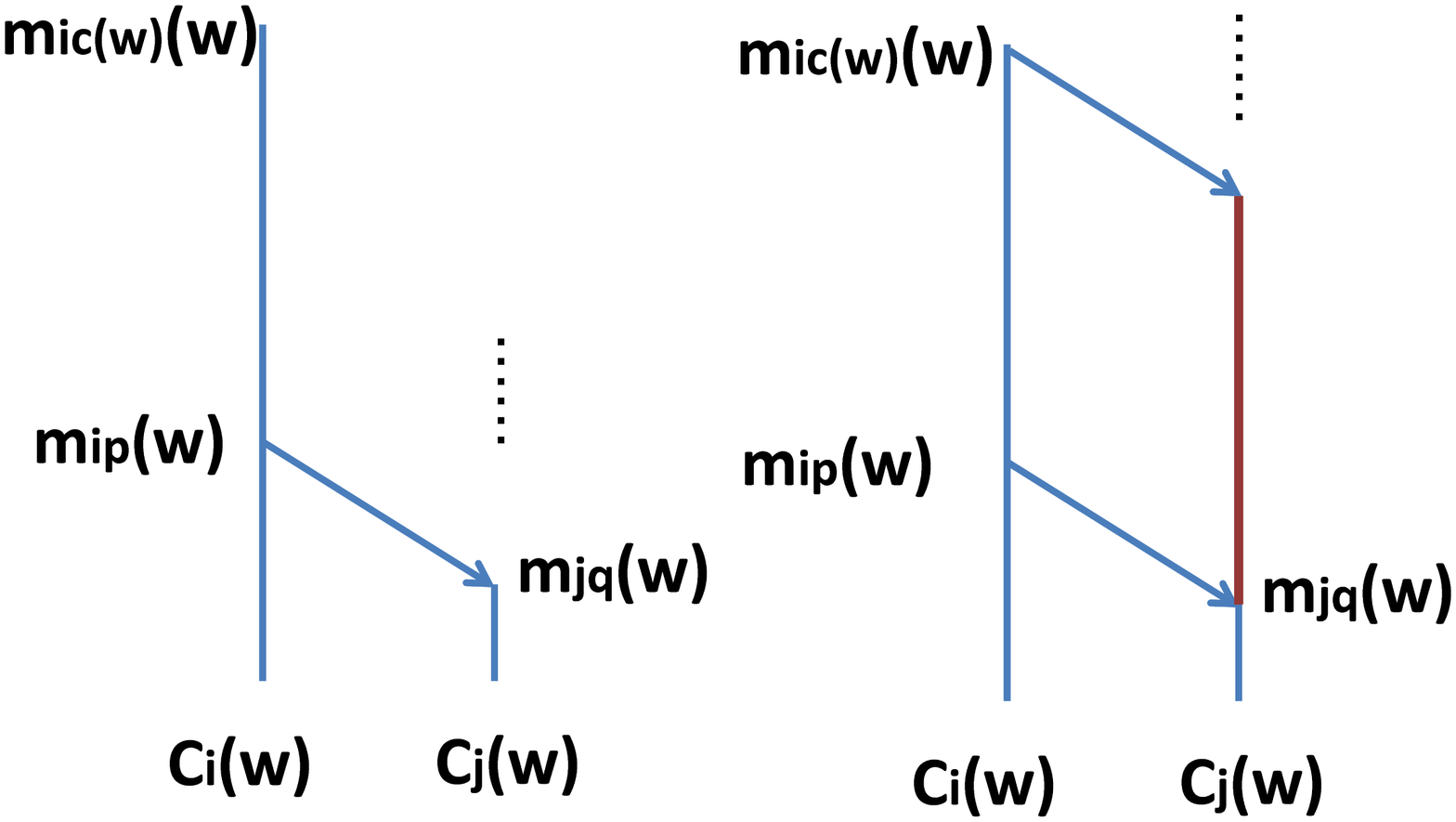}
 \caption{  }
 \label{fig:extension-of-cj-chain}
\end{center}
\end{figure}

The above  Lemma {\rmfamily \ref{extension-of-cj-chain}} says that if $m_{i,p}(\omega) > m_{j,q}(\omega)$ with $1 \le i < j \le n, p \in [c_{i}(\omega)]$ and $q \in [c_{j}(\omega) ]$  then
 there exists $c_{i}(\omega) + q -p \in [c_{j}(\omega)]$ such that 
$m_{i,c_{i}(\omega)}(\omega) >  m_{j,c_{i}(\omega) + q -p}(\omega)$
 by Lemma   {\rmfamily \ref{parallelogram-completion}}.
Figure \ref{fig:extension-of-cj-chain} shows a visualization of Lemma {\rmfamily \ref{extension-of-cj-chain}}.

For root poset $M_{\omega}$ we have the following observation.

\begin{lemma}\label{C4-parallelogram-goto-parallelogram}

For $\omega \in S_{n}$ if $M_{\omega}$ has a $C_{4}$-parallelogram-pattern then $M_{\omega}$ has a parallelogram-pattern.

\end{lemma}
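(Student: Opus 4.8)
The plan is to produce the required parallelogram-pattern on the very same pair of columns $C_i(\omega), C_j(\omega)$ that already carries a $C_{4}$-parallelogram-pattern. All I will use from the hypothesis is that there are $1\le i<j\le n$, indices $b<a\le c_i(\omega)$ and $d\le c_j(\omega)$ with $m_{i,b}(\omega)>m_{j,d}(\omega)$ (and $d\ge 1$, since $d\in[c_j(\omega)]$). Because $m_{i,b}(\omega)$ and $m_{j,d}(\omega)$ are comparable, Lemma \ref{denoncourt-231modoki}(1) forces $(i,j)\notin{\rm Inv}(\omega)$, so Lemma \ref{denoncourt-231modoki}(2) applies to columns $i,j$. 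Writing $k:=c_{i,j}(\omega)$, that lemma says $m_{i,x}(\omega)>m_{j,y}(\omega)\iff y\le x-k$; applied to $m_{i,b}(\omega)>m_{j,d}(\omega)$ it yields $d+k\le b$. I will show that
\[
\{\, m_{i,d+k+1}(\omega),\ m_{i,d+k}(\omega),\ m_{j,d}(\omega),\ m_{j,d+1}(\omega)\,\}
\]
— that is, parallelogram-pattern data $(a',b',c',d')=(d+k+1,\,d+k,\,d,\,d+1)$ on the pair $i<j$ — is a parallelogram-pattern subposet of $M_{\omega}$.

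The first step is to see that these are four distinct, legitimate elements satisfying the arithmetic side conditions of a parallelogram-pattern. From $d+k\le b<a\le c_i(\omega)$ we get $1\le d+k<d+k+1\le c_i(\omega)$, so $m_{i,d+k}(\omega),m_{i,d+k+1}(\omega)\in C_i(\omega)$. To see $m_{j,d+1}(\omega)\in C_j(\omega)$ I would invoke Lemma \ref{extension-of-cj-chain} on $m_{i,b}(\omega)>m_{j,d}(\omega)$: it gives $c_j(\omega)\ge c_i(\omega)+d-b\ge a+d-b\ge d+1$, using $a\le c_i(\omega)$ and $b<a$. The identity $(d+k+1)+d=(d+k)+(d+1)$ and the strict inequalities $d+k<d+k+1$ and $d<d+1$ are immediate, and the four elements are pairwise distinct: distinct elements of the same column $C_i(\omega)$ differ in their $i$-th coordinate, while an element of $C_i(\omega)$ has nonzero $i$-th coordinate and an element of $C_j(\omega)$ has $i$-th coordinate $0$.

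The second step is to check the order relations demanded by the definition. By Lemma \ref{denoncourt-231modoki}(2): $m_{i,d+k+1}(\omega)>m_{j,d+1}(\omega)$ since $d+1\le (d+k+1)-k$, and $m_{i,d+k}(\omega)>m_{j,d}(\omega)$ since $d\le (d+k)-k$. Finally $m_{i,d+k}(\omega)$ and $m_{j,d+1}(\omega)$ are incomparable: reading off coordinates from the definition, $m_{i,d+k}(\omega)$ has $i$-th coordinate $d+k>0$ and $j$-th coordinate $\max\{0,(d+k)-k\}=d$, whereas $m_{j,d+1}(\omega)$ has $i$-th coordinate $0$ and $j$-th coordinate $d+1>d$; hence $m_{i,d+k}(\omega)\not\le m_{j,d+1}(\omega)$ (compare $i$-th coordinates) and $m_{i,d+k}(\omega)\not\ge m_{j,d+1}(\omega)$ (compare $j$-th coordinates). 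With $b'=d+k<a'=d+k+1$ and $c'=d<d'=d+1$, this is exactly a parallelogram-pattern subposet, so $M_{\omega}$ has a parallelogram-pattern.

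There is no genuine obstacle here: the whole argument is bookkeeping once the quadruple is written down. The only point requiring attention is staying within the legal index ranges $[c_i(\omega)]$ and $[c_j(\omega)]$; the nontrivial half of that, $d+1\le c_j(\omega)$, is precisely where the $C_{4}$-chain hypothesis is used, via Lemma \ref{extension-of-cj-chain} — the element $m_{i,a}(\omega)$ lying strictly above $m_{i,b}(\omega)$ is what forces $c_j(\omega)>d$ and thereby opens the room for $m_{j,d+1}(\omega)$, the element whose presence breaks the comparability in the chain.
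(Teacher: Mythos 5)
Your proof is correct. Every step checks out: $d+k\le b<a\le c_i(\omega)$ puts both $m_{i,d+k}(\omega)$ and $m_{i,d+k+1}(\omega)$ in range, Lemma \ref{extension-of-cj-chain} applied to $m_{i,b}(\omega)>m_{j,d}(\omega)$ together with $c_i(\omega)\ge a\ge b+1$ gives $c_j(\omega)\ge d+1$, and the comparability criterion of Lemma \ref{denoncourt-231modoki}(2) makes the two required relations and the incomparability of $m_{i,d+k}(\omega)$ with $m_{j,d+1}(\omega)$ immediate arithmetic.

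Your route differs from the paper's in execution, though the underlying picture is the same (slide along the two chains until comparability breaks). The paper proves this lemma by induction on $b-d$: it repeatedly replaces the pair $(c,d)$ by $(c+1,d+1)$, checks at each step via Lemma \ref{parallelogram-completion} that the two cross relations survive, and stops when $m_{i,b}(\omega)$ and the lifted $j$-element become incomparable. You instead compute the exact threshold in closed form, taking $b'=d+c_{i,j}(\omega)$, the smallest index on the $i$-chain still above $m_{j,d}(\omega)$, and pairing it with $m_{j,d+1}(\omega)$. What your version buys is that the incomparability is a one-line coordinate computation rather than the terminating case of an induction, and the whole argument needs only Lemma \ref{denoncourt-231modoki}(2) plus Lemma \ref{extension-of-cj-chain}; what the paper's version buys is that it never needs the explicit numerical criterion $y\le x-c_{i,j}(\omega)$, only the qualitative shifting statement of Lemma \ref{parallelogram-completion}. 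Both are sound; yours is arguably the cleaner write-up.
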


\begin{proof}

By assumption there exists $\{ m_{i,a}(\omega), m_{i,b}(\omega), m_{j,c}(\omega), m_{j,d}(\omega) \} \subset M_{\omega}$  such that 
$ m_{i,a}(\omega) > m_{i,b}(\omega) >  m_{j,d}(\omega)  >  m_{j,c}(\omega)$ 
for some $1 \le i < j \le n, b < a \in [c_{i}(\omega)]$ 
 $ c < d \in [c_{j}(\omega)] $ with   $a + c = b + d$. 
It is easy to see that  $b \ge d$ because $m_{i,b}(\omega) >  m_{j,d}(\omega)$.

We will show that it is possible to construct a parallelogram-pattern poset from \\ $\{ m_{i,a}(\omega), m_{i,b}(\omega), m_{j,c}(\omega), m_{j,d}(\omega) \} $ by induction on $b-d$.  

If $b=d$ then $c_{i}(\omega) \ge a \ge b+1 > d$ and by Lemma {\rmfamily \ref{extension-of-cj-chain}} we have $d+1 \le c_{j}(\omega)$.

Consider the subposet $\{ m_{i,a}(\omega), m_{i,b}(\omega), m_{j,c+1}(\omega), m_{j,d+1}(\omega) \} \subset M_{\omega}$ where $i,j,a,b,c,d$ are as above.

We have $m_{i,a}(\omega) > m_{j,d+1}(\omega)$ 
because $m_{i,b}(\omega) > m_{j,d}(\omega)$ and hence $ m_{i,a}(\omega) \ge 
m_{i,b+1}(\omega) > m_{j,d+1}(\omega)$. 
 Also $m_{i,b}(\omega) > m_{j,c+1}(\omega)$ because $m_{i,b}(\omega) > m_{j,d}(\omega) \ge  m_{j,c+1}(\omega)$. 
Obviously $m_{i,b}(\omega)$ and $m_{j,d+1}(\omega)$ are incomparable. 
Therefore the subposet $\{ m_{i,a}(\omega), m_{i,b}(\omega), m_{j,c+1}(\omega), m_{j,d+1}(\omega) \} $ is parallelogram-pattern poset. 

Assume that we can construct  a parallelogram-pattern poset from  $\{ m_{i,a}(\omega), m_{i,b}(\omega), m_{j,c}(\omega), m_{j,d}(\omega) \} $ for $b-d \le k-1$. 

If $b-d = k$ then we consider a subposet $\{ m_{i,a}(\omega), m_{i,b}(\omega), m_{j,c+1}(\omega), m_{j,d+1}(\omega) \} \subset M_{\omega}$ where $i,j,a,b,c,d$ are as above.  
We have $m_{i,a}(\omega) > m_{j,d+1}(\omega)$ 
because $m_{i,b}(\omega) > m_{j,d}(\omega)$ and hence $ m_{i,a}(\omega) \ge 
m_{i,b+1}(\omega) > m_{j,d+1}(\omega)$. 
 Also $m_{i,b}(\omega) > m_{j,c+1}(\omega)$ because $m_{i,b}(\omega) > m_{j,d}(\omega) \ge  m_{j,c+1}(\omega)$. 
If $ m_{i,b}(\omega) > m_{j,d+1}(\omega)$ then we can construct a parallelogram-pattern poset by the assumption because $b-(d+1) = k-1$. 
If $m_{i,b}(\omega)$ and $m_{j,d+1}(\omega)$ are incomparable then the poset \\
$\{ m_{i,a}(\omega), m_{i,b}(\omega), m_{j,c+1}(\omega), m_{j,d+1}(\omega) \}$ 
is a parallelogram-pattern poset. This completes the proof. 

\end{proof}

\begin{lemma}\label{B2-equiv-parallelogram}

For $\omega \in S_{n}$ the poset $M_{\omega}$ has a $B_{2}$-pattern if and only if  it has a parallelogram-pattern. 

\end{lemma}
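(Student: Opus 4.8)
The plan is to prove the nontrivial direction --- a $B_{2}$-pattern forces a parallelogram-pattern; the reverse is the remark just before the lemma --- by reducing everything to a numerical condition on $\omega$.

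\emph{A construction.} First I would isolate the following fact: if there exist $1\le i<j\le n$ with $(i,j)\notin\mathrm{Inv}(\omega)$, $c_{i}(\omega)\ge c_{i,j}(\omega)+2$ and $c_{j}(\omega)\ge 2$, then $M_{\omega}$ has a parallelogram-pattern. Indeed, put $a=c_{i,j}(\omega)+2$, $b=c_{i,j}(\omega)+1$, $c=1$, $d=2$; then $a,b\in[c_{i}(\omega)]$, $c,d\in[c_{j}(\omega)]$, $a+c=b+d$, and $m_{i,a}(\omega),m_{i,b}(\omega),m_{j,c}(\omega),m_{j,d}(\omega)$ are distinct (their $i$-th coordinates are $a,b,0,0$). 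By Lemma~\ref{denoncourt-231modoki}(2), $m_{i,a}(\omega)>m_{j,d}(\omega)$ (as $d=2\le a-c_{i,j}(\omega)$), $m_{i,b}(\omega)>m_{j,c}(\omega)$ (as $c=1\le b-c_{i,j}(\omega)$), and $m_{i,b}(\omega)\not>m_{j,d}(\omega)$ (as $d=2>1=b-c_{i,j}(\omega)$), while $m_{i,b}(\omega)<m_{j,d}(\omega)$ is impossible for $i<j$; so these four elements form a parallelogram-pattern poset. It thus suffices to extract such a pair $i<j$ from an arbitrary $B_{2}$-pattern.

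\emph{Extracting the pair.} Let $\{p,q,r,s\}$ be a $B_{2}$-pattern with top $p$, bottom $s$ and incomparable pair $q,r$. Since every $C_{k}(\omega)$ is a chain, $q$ and $r$ lie in distinct chains; write $q\in C_{i}(\omega)$, $r\in C_{j}(\omega)$ and, by the $q\leftrightarrow r$ symmetry of $B_{2}$, assume $i<j$; write $p\in C_{h}(\omega)$, $s\in C_{l}(\omega)$. By Lemma~\ref{denoncourt-231modoki} a strict relation between distinct chains always runs from the smaller to the larger index, so $h\le i<j\le l$; moreover $p>r$ gives $(h,j)\notin\mathrm{Inv}(\omega)$ and $\pi_{j}(r)\le\pi_{h}(p)-c_{h,j}(\omega)$, and $q>s$ gives $(i,l)\notin\mathrm{Inv}(\omega)$ and $\pi_{l}(s)\le\pi_{i}(q)-c_{i,l}(\omega)$. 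Now split on $\pi_{j}(r)$. If $\pi_{j}(r)\ge 2$, the pair $h<j$ satisfies the construction's hypotheses: $c_{h}(\omega)\ge\pi_{h}(p)\ge\pi_{j}(r)+c_{h,j}(\omega)\ge c_{h,j}(\omega)+2$ and $c_{j}(\omega)\ge\pi_{j}(r)\ge 2$. If $\pi_{j}(r)=1$, then $l>j$ (else $s\in C_{j}(\omega)$ would force $\pi_{l}(s)<1$), and $(i,j)\in\mathrm{Inv}(\omega)$: otherwise incomparability of $q,r$ gives $\pi_{i}(q)\le c_{i,j}(\omega)$ by Lemma~\ref{denoncourt-231modoki}(2), while $\pi_{i}(q)\ge\pi_{l}(s)+c_{i,l}(\omega)\ge 1+c_{i,j}(\omega)$ (using $c_{i,l}(\omega)\ge c_{i,j}(\omega)$ since $i<j<l$), a contradiction. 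Since $i<j<l$ and $\omega(i)>\omega(j)$ we get $c_{i,l}(\omega)\ge 1$, whence $\pi_{i}(q)\ge\pi_{l}(s)+c_{i,l}(\omega)\ge 2$; and $h<i$, since $h=i$ would place $p\in C_{i}(\omega)$ above $r\in C_{j}(\omega)$, forcing $(i,j)\notin\mathrm{Inv}(\omega)$. Then the pair $h<i$ works: $(h,i)\notin\mathrm{Inv}(\omega)$ (from $p>q$), $c_{h}(\omega)\ge\pi_{h}(p)\ge\pi_{i}(q)+c_{h,i}(\omega)\ge c_{h,i}(\omega)+2$, and $c_{i}(\omega)\ge\pi_{i}(q)\ge 2$. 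In both cases the construction produces a parallelogram-pattern.

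\emph{Main obstacle.} The delicate case is $\pi_{j}(r)=1$: then the two chains $C_{i}(\omega),C_{j}(\omega)$ housing the incomparable pair are indexed by an inversion, hence are totally incomparable, so no parallelogram can sit on $C_{i}(\omega)\cup C_{j}(\omega)$ and one is forced to pivot to $C_{h}(\omega),C_{i}(\omega)$; the crux is extracting $\pi_{i}(q)\ge 2$, and this is exactly the point where one uses that the inversion $(i,j)$ lies \emph{inside} the span $(i,l)$, i.e.\ $c_{i,l}(\omega)\ge 1$. A more naive route --- straightening the given four elements into a $C_{4}$-parallelogram-pattern and invoking Lemma~\ref{C4-parallelogram-goto-parallelogram} --- runs into the same case, since the original $B_{2}$-pattern need not be contained in two chains at all.
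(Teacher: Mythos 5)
Your proof is correct, but it takes a genuinely different route from the paper's. The paper works directly with the four given elements of the $B_{2}$-pattern, splits into six cases according to whether the top/bottom share a chain with the incomparable pair and the sizes of the indices $b,c,d$, produces in each case a quadruple that is either a parallelogram- or a $C_{4}$-parallelogram-pattern, and then invokes Lemma~\ref{C4-parallelogram-goto-parallelogram} (an induction on $b-d$) to upgrade the latter to the former; one configuration (the paper's Case~6) has to be ruled out by a separate coordinate computation. You instead isolate a purely numerical criterion --- $(i,j)\notin\mathrm{Inv}(\omega)$, $c_{i}(\omega)\ge c_{i,j}(\omega)+2$, $c_{j}(\omega)\ge 2$ --- and observe that placing the indices exactly at the comparability threshold ($a=c_{i,j}(\omega)+2$, $b=c_{i,j}(\omega)+1$, $c=1$, $d=2$) forces the diagonal pair to be incomparable by Lemma~\ref{denoncourt-231modoki}(2), so the parallelogram appears directly and Lemmas~\ref{C4-parallelogram-goto-parallelogram} and~\ref{extension-of-cj-chain} are never needed. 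Your extraction step then collapses the paper's six cases into two ($\pi_{j}(r)\ge 2$ versus $\pi_{j}(r)=1$), and your observation that $c_{i,l}(\omega)\ge 1$ when the inversion $(i,j)$ sits inside the span $(i,l)$ is precisely what absorbs the paper's Case~6 into the general argument rather than treating it as an impossible outlier. What the paper's approach buys is that Lemma~\ref{C4-parallelogram-goto-parallelogram} is reused in the proof of Lemma~\ref{parallelogram-equiv-3412-3421}, so it is not wasted machinery there; what yours buys is a shorter, threshold-based argument for the present lemma that makes the location of the parallelogram explicit.
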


\begin{proof}

If $M_{\omega}$ has a parallelogram-pattern then obviously it has a $B_{2}$-pattern. Conversely we assume that $M_{\omega}$ has a $B_{2}$-pattern. 

Let $\{ m_{i,a}(\omega), m_{j,b}(\omega), m_{k,c}(\omega), m_{l,d}(\omega) \} $ with $i, j, k, l \in \mathbb{N}, a \in [c_{i}(\omega)], b \in [c_{j}(\omega)], 
c \in [c_{k}(\omega)]$ and $d \in [c_{l}(\omega)]$ 
be a $B_{2}$-pattern subposet of $M_{\omega}$ 
where $m_{i,a}(\omega)$ (resp. $m_{l,d}(\omega)$) is the maximum (resp. minimum) element and $m_{j,b}(\omega)$ and $m_{k,c}(\omega)$ are incomparable.
We can set $j < k$ without loss of generality and
 hence we have $i \le j < k \le l $.\\

{\bf Case.1} (The case of $d \ge 2$)

We have $m_{i,a-d-1}(\omega) > m_{l,1}(\omega)$ because $m_{i,a}(\omega) > m_{l,d}(\omega)$ and by Lemma 
{\rmfamily \ref{parallelogram-completion}}.
 Hence the poset $\{ m_{i,a}(\omega), m_{i,a-d+1}(\omega), m_{l,d}(\omega), 
m_{l,1}(\omega)      \}$ is either a parallelogram-pattern poset or a $C_{4}$-parallelogram-pattern poset. By Lemma {\rmfamily \ref{C4-parallelogram-goto-parallelogram}} the poset $M_{\omega}$ has a parallelogram-pattern for both cases. \\

{\bf Case.2} (The case of $d =1$ and $i=j$)

We have $c_{l}(\omega) \ge c_{i}(\omega) + 1 -b \ge a-b+1 $ because $m_{i,b}(\omega) > m_{l,1}(\omega)$ and by 
Lemma {\rmfamily \ref{extension-of-cj-chain}}. Also we have $a-b+1 \ge 2$ and $c_{l}(\omega) \ge 2$ because $i=j$ and $m_{i,a}(\omega) > m_{j,b}(\omega)$.

Hence the poset $\{ m_{i,b+1}(\omega), m_{i,b}(\omega), m_{l,2}(\omega), 
m_{l,1}(\omega)      \}$ is either a parallelogram-pattern poset or a $C_{4}$-parallelogram-pattern poset because $m_{i,b}(\omega) > m_{l,1}(\omega)$ and by 
Lemma {\rmfamily \ref{parallelogram-completion}}. 

By Lemma {\rmfamily \ref{C4-parallelogram-goto-parallelogram}} the poset $M_{\omega}$ has a parallelogram-pattern for both cases. \\

{\bf Case.3} (The case of $d =1$ and $k=l$)

In this case we have $m_{i,a}(\omega) > m_{k,c}(\omega)  \ge
 m_{k,d}(\omega)$ so we have $c > d \ge 1$. 

By Lemma {\rmfamily \ref{parallelogram-completion}} 
the poset $\{ m_{i,a}(\omega), m_{i,a-1}(\omega), m_{k,c}(\omega), m_{k,c-1}(\omega) \} $ is either a parallelogram-pattern poset or a $C_{4}$-parallelogram-pattern poset. 
By Lemma {\rmfamily \ref{C4-parallelogram-goto-parallelogram}} the poset $M_{\omega}$ has a parallelogram-pattern for both cases. \\

Next we will consider the case of $d=1$ with $i < j < k<l$.\\

{\bf Case.4} (The case of $b \ge 2$ or $c \ge 2$ with $d =1$ and $ i < j < k<l$)

We will consider the case of $b \ge 2$ and for the case of $ c \ge 2$ we can use the same argument. 
Because $m_{i,a}(\omega) > m_{j,b}(\omega)$ we have $a \ge 2$ and 
 $m_{i,a-1}(\omega) > m_{j,b-1}(\omega)$ by Lemma  {\rmfamily \ref{parallelogram-completion}} 

Hence the poset 
$\{  m_{i,a}(\omega),m_{i,a-1}(\omega),m_{j,b}(\omega),m_{j,b-1}(\omega)        \}$  is either a parallelogram-pattern poset or a $C_{4}$-parallelogram-pattern poset.
By Lemma {\rmfamily \ref{C4-parallelogram-goto-parallelogram}} the poset $M_{\omega}$ has a parallelogram-pattern for both cases. \\

{\bf Case.5} (The case of $b = c = d =1$ with $ i < j < k<l$ and $c_{j}(\omega) \ge 2$ or $c_{k}(\omega) \ge 2$)

We will consider the case of $c_{j}(\omega) \ge 2$ and for the case of $c_{k}(\omega)$ we can use the same argument.

From Lemma {\rmfamily \ref{extension-of-cj-chain}} we obtain $c_{l}(\omega) \ge c_{j}(\omega) + 1 -1 \ge 2$ because $m_{j,1}(\omega) > m_{l,1}(\omega)$ and 
$c_{j}(\omega) \ge 2$. By Lemma {\rmfamily \ref{parallelogram-completion}} we have  $m_{j,2}(\omega) > \exists m_{l,2}(\omega)$.
Hence the poset 
$\{  m_{j,2}(\omega), m_{j,1}(\omega),  m_{l,2}(\omega), m_{l,1}(\omega)   \}$ 
is  either a parallelogram-pattern poset or a $C_{4}$-parallelogram-pattern poset.
By Lemma {\rmfamily \ref{C4-parallelogram-goto-parallelogram}} the poset $M_{\omega}$ has a parallelogram-pattern for both cases. \\

{\bf Case.6} (The case of $b = c = d =1$ with $ i < j < k<l$ and $c_{j}(\omega)  = c_{k}(\omega) =1$)

We have $\omega(i) < \omega(j)$ and $\omega(k) < \omega(l)$ because $m_{i,a}(\omega) > m_{j,b}(\omega)$ and $m_{k,c}(\omega) > m_{l,d}(\omega)$.

Set $m_{i,a}(\omega) = 
(0, \cdots ,0, \displaystyle\overbrace{a}^{i}, \cdots , \displaystyle\overbrace{x}^{l}, \cdots )$
 and 
$m_{l,1}(\omega) = 
(0, \cdots ,0, \displaystyle\overbrace{0}^{i}, \cdots , 0, \displaystyle\overbrace{1}^{l}, \cdots )$. We obtain $x \ge 1$ so there exists  $p > l$ such that 
$\omega(i) > \omega(p)$.
It is easy to see that $\{ y | j < y, \omega(j) > \omega(y) \} = \{ p \}$ because $c_{j}(\omega) = 1$.
Hence  $\omega(j) < \omega(k)$ and then we get 
$\omega(i) < \omega(j) < \omega(k) < \omega(l)$.

Now we have 
$m_{j,1}(\omega) = 
(0, \cdots ,0, \displaystyle\overbrace{1}^{j},1,  \cdots , 1, \displaystyle\overbrace{1}^{k},1 \cdots ,1 , \displaystyle\overbrace{1}^{l},1 \cdots 
1, \displaystyle\overbrace{0}^{p}, 0 \dots 0 )$ 
and \\
$m_{k,1}(\omega) = 
(0, \cdots ,0, \displaystyle\overbrace{0}^{j},0,  \cdots , 0, \displaystyle\overbrace{1}^{k},1 \cdots ,1 , \displaystyle\overbrace{1}^{l},1, \cdots ,1,  \displaystyle\overbrace{0}^{p}, 0 \dots 0 )$ 
because $c_{k}(\omega) = 1$ and $\omega(k) > \omega(i) > \omega(p)$.  
Then we get $m_{j,1}(\omega)  > m_{k,1}(\omega) $ and this contradicts the assumption that $m_{j,1}(\omega)$ and $ m_{k,1}(\omega)$ are incomparable. 
Therefore the {\bf case.6} never happens. \\

This completes the proof. 
\end{proof}

\begin{lemma}\label{parallelogram-equiv-3412-3421}

For $\omega \in S_{n}$ the poset $M_{\omega}$ has a parallelogram-pattern if and only if $\omega$ has a $3412$-pattern or a $3421$-pattern.

\end{lemma}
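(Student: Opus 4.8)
The plan is to collapse both sides of the equivalence onto one combinatorial condition on $\omega$ and then check that each side is equivalent to it. First I would record the following reformulation of the right-hand side: $\omega$ has a $3412$-pattern or a $3421$-pattern if and only if there exist positions $p_1<p_2<p_3<p_4$ with $\omega(p_3),\omega(p_4)<\omega(p_1)<\omega(p_2)$ --- indeed, $\omega(p_2)$ is then the largest and $\omega(p_1)$ the third largest of the four values, and whether $\omega(p_3)<\omega(p_4)$ or $\omega(p_3)>\omega(p_4)$ decides between the patterns $3412$ and $3421$. Setting $i=p_1$ and $j=p_2$, this says: there are $1\le i<j\le n$ with $\omega(i)<\omega(j)$ and $\#\{k\mid k>j,\ \omega(i)>\omega(k)\}\ge 2$; and since $\omega(i)<\omega(j)$, the cardinality here is exactly $c_i(\omega)-c_{i,j}(\omega)$ by the definitions of $c_i$ and $c_{i,j}$.

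For the forward implication I would take a parallelogram-pattern poset $\{m_{i,a}(\omega),m_{i,b}(\omega),m_{j,c}(\omega),m_{j,d}(\omega)\}$ with $i<j$, $b<a$, $c<d$ and $a+c=b+d$. The comparability $m_{i,b}(\omega)>m_{j,c}(\omega)$ forces $(i,j)\notin\mathrm{Inv}(\omega)$ by the first part of Lemma~\ref{denoncourt-231modoki}, hence $\omega(i)<\omega(j)$; then the second part of that lemma, applied to $m_{i,a}(\omega)>m_{j,d}(\omega)$, gives $d\le a-c_{i,j}(\omega)\le c_i(\omega)-c_{i,j}(\omega)$. As $1\le c<d$ we have $d\ge 2$, so $c_i(\omega)-c_{i,j}(\omega)\ge 2$, and the reformulation above yields a $3412$- or $3421$-pattern in $\omega$.

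For the converse I would start from positions $i=p_1<j=p_2<p_3<p_4$ with $\omega(p_3),\omega(p_4)<\omega(i)<\omega(j)$. Then $c_i(\omega)-c_{i,j}(\omega)=\#\{k>j\mid\omega(i)>\omega(k)\}\ge 2$, and $c_j(\omega)\ge 2$ as well, because $p_3,p_4>j$ and $\omega(j)>\omega(i)>\omega(p_3),\omega(p_4)$. Writing $t=c_{i,j}(\omega)$, I would then take the witnesses $a=t+2$, $b=t+1$, $c=1$, $d=2$ and verify: $1\le b<a\le c_i(\omega)$ and $1\le c<d\le c_j(\omega)$, so these are legal indices; $a+c=b+d$; by the second part of Lemma~\ref{denoncourt-231modoki}, $m_{i,a}(\omega)>m_{j,d}(\omega)$ (since $d=2=a-t$) and $m_{i,b}(\omega)>m_{j,c}(\omega)$ (since $c=1=b-t$); and $m_{i,b}(\omega)$, $m_{j,d}(\omega)$ are incomparable, since $d=2>1=b-t$ rules out $m_{i,b}(\omega)>m_{j,d}(\omega)$ and the nonzero $i$-th coordinate of $m_{i,b}(\omega)$ rules out the reverse inequality. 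This produces a parallelogram-pattern poset.

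The only delicate point is the bookkeeping in the converse: one has to be sure the chosen indices $a,b,c,d$ lie in $[c_i(\omega)]$ and $[c_j(\omega)]$, satisfy the balance condition $a+c=b+d$, and make all the inequalities of Lemma~\ref{denoncourt-231modoki} hold, including the two that become equalities at the boundary. The minimal choice $(t+2,t+1,1,2)$ works and uses nothing beyond the two facts $c_i(\omega)-c_{i,j}(\omega)\ge 2$ and $c_j(\omega)\ge 2$, the second of which is automatic from the pattern; I do not expect any obstacle beyond this verification.
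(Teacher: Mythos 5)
Your proof is correct, and both directions ultimately rest on the same engine as the paper's --- Lemma \ref{denoncourt-231modoki} and the identity $c_i(\omega)-c_{i,j}(\omega)=\#\{k>j:\omega(i)>\omega(k)\}$ when $\omega(i)<\omega(j)$; your ``parallelogram implies pattern'' direction is essentially the paper's argument restated via the numerical criterion $d\le a-c_{i,j}(\omega)$ instead of an explicit coordinate display. The genuine difference is in the other direction. The paper takes the witnesses $m_{i,c_i(\omega)},m_{i,c_i(\omega)-1},m_{j,2},m_{j,1}$, proves the needed comparability by a hands-on coordinate computation, and then --- because it cannot control whether $m_{i,c_i(\omega)-1}$ and $m_{j,2}$ are comparable --- concludes only that the subposet is a parallelogram \emph{or} a $C_4$-parallelogram, after which it must invoke the inductive conversion Lemma \ref{C4-parallelogram-goto-parallelogram}. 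You instead calibrate the indices to $a=c_{i,j}(\omega)+2$, $b=c_{i,j}(\omega)+1$, $c=1$, $d=2$, so that the ``if and only if'' in Lemma \ref{denoncourt-231modoki}(2) delivers the two comparabilities and the incomparability of $m_{i,b}(\omega)$ with $m_{j,d}(\omega)$ simultaneously (the reverse inequality being killed by the $i$-th coordinate). This makes the case split and Lemma \ref{C4-parallelogram-goto-parallelogram} unnecessary for this lemma, which is a real simplification; the paper's route, on the other hand, keeps the conversion lemma as a reusable tool (it is also needed in the proof of Lemma \ref{B2-equiv-parallelogram}, so it is not eliminated from the paper overall). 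Your index bookkeeping checks out: $a\le c_i(\omega)$ follows from $c_i(\omega)-c_{i,j}(\omega)\ge2$, $d\le c_j(\omega)$ from $c_j(\omega)\ge2$, and $a+c=b+d$ holds.
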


\begin{proof}

Suppose that $\omega$ has a $3412$-pattern. Then there exists $i < j < k < l$
 such that \\ 
$st(\omega(i) \omega(j) \omega(k) \omega(l)) = 3412$ and we obtain 
$c_{i}(\omega) \ge 2$ and $c_{j}(\omega) \ge 2$. 
We have 

\begin{flushleft}

$m_{i, c_{i}(\omega)}(\omega) = (0, \cdots ,0, \displaystyle\overbrace{c_{i}(\omega)}^{i},\cdots , \displaystyle\overbrace{p}^{j}, \cdots ) $

$m_{i, c_{i}(\omega) -1}(\omega) = (0, \cdots ,0, \displaystyle\overbrace{c_{i}(\omega) -1 }^{i},\cdots , \displaystyle\overbrace{p-1}^{j}, \cdots ) $

$m_{j, 2}(\omega) = (0, \cdots ,0, \displaystyle\overbrace{0}^{i},\cdots ,0, \displaystyle\overbrace{2}^{j}, \cdots ) $

$m_{j, 1}(\omega) = (0, \cdots ,0, \displaystyle\overbrace{0}^{i},\cdots ,0, \displaystyle\overbrace{1}^{j}, \cdots ) $

\end{flushleft}

with $p \ge 2$ because $\omega(i) > \omega(k), \omega(l)$. \\

{\bf Claim} 

$m_{i,c_{i}(\omega) -1}(\omega) > m_{j, 1}(\omega)$\\

For $x \le j$ the $x$-th entry of $m_{j, 1}(\omega)$ is less than that of 
$m_{i, c_{i}(\omega) -1}(\omega)$.

For $x > j$ the $x$-th entry of $m_{j, 1}(\omega)$ is $0$ or $1$. 
If that of $m_{j, 1}(\omega)$ equals to  $1$ then  
$(j,y) \notin Inv(\omega)$ for $j < y \le x$
 and hence $\omega(i) < \omega(j) < \omega(x)$. 
Therefore $(i,y) \notin Inv(\omega)$ for $j < y \le x$  
and the $x$-th entry of 
$m_{i, c_{i}(\omega) -1}(\omega)$ is $(p-1) \ge 1$. 
The  $x$-th entry of of 
$m_{j, 1}(\omega)$ is less than that  of $m_{i, c_{i}(\omega) -1}(\omega)$ if 
that of $m_{j, 1}(\omega)$ is $0$.  
Hence we have $m_{i,c_{i}(\omega)-1}(\omega) > m_{j, 1}(\omega)$.

For the set 
$\{ m_{i, c_{i}(\omega)}(\omega), m_{i, c_{i}(\omega) -1}(\omega), m_{j, 2}(\omega), m_{j, 1}(\omega) \}$  we obtain $ m_{i, c_{i}(\omega)}(\omega) > m_{i, c_{i}(\omega) -1}(\omega) >  m_{j, 1}(\omega)$. 
Also we have $ m_{i, c_{i}(\omega)}(\omega) > m_{j,2}(\omega) >  m_{j, 1}(\omega)$ by Lemma  {\rmfamily \ref{parallelogram-completion}}.

Then the induced subposet is either a parallelogram-pattern poset or a $C_{4}$-parallelogram-pattern poset.
 By Lemma {\rmfamily \ref{C4-parallelogram-goto-parallelogram}} the poset $M_{\omega}$ has a parallelogram-pattern for both cases. 
We can use the same argument if $\omega$ has a $3421$-pattern. \\

Suppose that  $M_{\omega}$ has a parallelogram-pattern poset  
$\{ m_{i,a}(\omega), m_{i,b}(\omega), m_{j,c}(\omega), m_{j,d}(\omega) \}$ with $1 \le i < j \le n, b < a \in [c_{i}(\omega)]$ and 
$ d < c \in [c_{j}(\omega)] $ and $a + d = b + c$ where 
$m_{i,a}(\omega)$ (resp. $m_{j,d}(\omega)$) is the maximum (resp. minimum) element and $m_{i,b}(\omega)$ and $m_{j,c}(\omega)$ are incomparable. In particular $c \ge 2$ because $c > d \ge 1$.

From Lemma {\rmfamily \ref{denoncourt-231modoki}} we have $\omega(i) < \omega(j)$. Put

\begin{flushleft}

$m_{i, a}(\omega) = (0, \cdots ,0, \displaystyle\overbrace{a}^{i},\cdots , \displaystyle\overbrace{x}^{j}, \cdots ) $

$m_{j,c}(\omega) = (0, \cdots ,0, \displaystyle\overbrace{0}^{i},\cdots , \displaystyle\overbrace{c}^{j}, \cdots ) $

\end{flushleft}

where $x \ge c \ge 2$ hence $\sharp \{ y | j < y, \omega(i) > \omega(y) \} \ge 2$ and  there exists $j < y_{1} < y_{2}$ such that $\omega(i) > \omega(y_{1})$ and $\omega(i) > \omega(y_{2})$.
Then we have $st(\omega(i) \omega(j) \omega(y_{1}) \omega(y_{2})) = 3412 \ {\rm or} \ 3421$. 

This completes the proof.

\end{proof}

From Lemma {\rmfamily \ref{B2-equiv-parallelogram}} and Lemma {\rmfamily \ref{parallelogram-equiv-3412-3421}} we obtain the following result.

\begin{theorem}

$M_{\omega}$ is a $B_{2}$-free poset if and only if $\omega$ is a  $3412-3421$-avoiding permutation.

\end{theorem}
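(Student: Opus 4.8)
The plan is to derive the final theorem as an immediate corollary of the chain of equivalences already established, so the real content is the structure of that chain rather than any new computation. The central object is the combinatorial translation provided by Lemma~\ref{denoncourt-231modoki}: a comparability $m_{i,x}(\omega) > m_{j,y}(\omega)$ with $i < j$ exists precisely when $\omega$ has a $231$-pattern using positions $i, j$ in the first two slots. This tells us that any non-chain behaviour in $M_{\omega}$ — in particular any $B_2$-pattern — must be witnessed by such occurrences, and it is the mechanism by which poset-patterns in $M_{\omega}$ are forced to come from permutation-patterns in $\omega$.

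First I would record the easy direction as essentially nothing to prove: $B_2$-free is the hypothesis of Lemma~\ref{B2-equiv-parallelogram}, and that lemma already says $M_\omega$ has a $B_2$-pattern iff it has a parallelogram-pattern, while Lemma~\ref{parallelogram-equiv-3412-3421} says a parallelogram-pattern is present iff $\omega$ has a $3412$- or $3421$-pattern. So the logical skeleton is
\[
M_\omega \text{ has a } B_2\text{-pattern} \iff M_\omega \text{ has a parallelogram-pattern} \iff \omega \text{ has a } 3412\text{- or }3421\text{-pattern},
\]
and negating both sides gives the theorem verbatim. Thus the proof is the single sentence ``combine Lemma~\ref{B2-equiv-parallelogram} and Lemma~\ref{parallelogram-equiv-3412-3421}'' — which is exactly what the author writes.

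The genuine work, of course, lives inside those two lemmas, and if I were to reprove them from scratch the main obstacle would be the ``$B_2 \Rightarrow$ parallelogram'' implication of Lemma~\ref{B2-equiv-parallelogram}. A $B_2$-pattern in $M_\omega$ can a priori use four chains $C_i, C_j, C_k, C_l$ with various index coincidences, and one must massage it into the rigid form where the two top chains coincide ($C_i$) and the two bottom chains coincide ($C_j$) with the arithmetic constraint $a+c = b+d$. The tools for this are Lemma~\ref{parallelogram-completion} (sliding a comparable pair up or down within its pair of chains), Lemma~\ref{extension-of-cj-chain} (guaranteeing enough room in $C_j$ to slide), and Lemma~\ref{C4-parallelogram-goto-parallelogram} (upgrading a degenerate ``$C_4$-parallelogram'' to a true parallelogram). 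The case analysis on $d$, on whether $i=j$ or $k=l$, and on the sizes $c_j(\omega), c_k(\omega)$ is unavoidable; the subtle case is $b=c=d=1$ with all four indices distinct and $c_j(\omega) = c_k(\omega) = 1$, where one shows via the explicit $0/1$ coordinate vectors that $m_{j,1}(\omega) > m_{k,1}(\omega)$, contradicting incomparability — so that configuration simply cannot occur.

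For Lemma~\ref{parallelogram-equiv-3412-3421} the forward direction takes a $3412$- or $3421$-occurrence at positions $i<j<k<l$, observes $c_i(\omega) \ge 2$ and $c_j(\omega) \ge 2$, and exhibits the explicit quadruple $\{m_{i,c_i(\omega)}, m_{i,c_i(\omega)-1}, m_{j,2}, m_{j,1}\}$, checking coordinate-by-coordinate (the key being that whenever $m_{j,1}(\omega)$ has a $1$ in some slot $x > j$, the pattern forces $\omega(i) < \omega(x)$, hence $m_{i,c_i(\omega)-1}(\omega)$ has entry $\ge p-1 \ge 1$ there) that it is a parallelogram- or $C_4$-parallelogram-pattern, then invokes Lemma~\ref{C4-parallelogram-goto-parallelogram}. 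The reverse direction reads off from a parallelogram-pattern that the $j$-th coordinate $x$ of $m_{i,a}(\omega)$ satisfies $x \ge c \ge 2$, so $\omega(i)$ dominates at least two positions beyond $j$, which together with $\omega(i) < \omega(j)$ yields a $3412$- or $3421$-occurrence. Once these two lemmas are in hand, the theorem is immediate, so I would present the final proof as nothing more than the citation of both.
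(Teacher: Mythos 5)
Your proposal is correct and matches the paper exactly: the author's proof of this theorem is precisely the one-line combination of Lemma~\ref{B2-equiv-parallelogram} and Lemma~\ref{parallelogram-equiv-3412-3421}, and your sketch of the internal structure of those two lemmas (the case analysis reducing a $B_2$-pattern to a parallelogram-pattern, and the translation between parallelogram-patterns and $3412$/$3421$-occurrences) agrees with what the paper actually does.
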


{\bf Acknowledgment}

The author wishes to thank Kento Nakada for his valuable advices.

\renewcommand{\refname}{REFERENCE}


\begin{thebibliography}{22}



\bibitem{birkhoff}
G. Birkhoff, Lattice Theory, Amer. Math. Soc. Colloq. Publ. No. 25, Americam Mathematical Society, Providence, RI, 1967.




\bibitem{denoncourt}
H. Denoncourt, A refinement of weak order intervals into distributive lattices, arXiv:1102.2689.





\bibitem{knuth-1}
D. Knuth, The art of computer programming, I: Fundamental algorithms. 
Addison-Wesley, Publishing Co, Reading, Mass-London-Don Mills, Ont, 1969

\bibitem{knuth-2}
D. Knuth, The art of computer programming, III: Sorting and searching, 
Addison-Wesley, Reading, MA, 1973.






\bibitem{stanley}
R. P. Stanley, Enumerative Combinatorics, vol. 2, Cambridge University 
Press, Cambridge, 1999.








\end{thebibliography}
\end{document}